\newtheorem{theorem}{Theorem}[section]
\newtheorem{corollary}[theorem]{Corollary}
\numberwithin{equation}{section}
\begin{document}

\title[Poisson structure on character varieties, II]{Poisson structure on character varieties, II}

\author[I. Biswas]{Indranil Biswas}

\address{Department of Mathematics, Shiv Nadar University, NH91, Tehsil
Dadri, Greater Noida, Uttar Pradesh 201314, India}

\email{indranil.biswas@snu.edu.in, indranil29@gmail.com}

\author[L. C. Jeffrey]{Lisa C. Jeffrey}

\address{Department of Mathematics,
University of Toronto, Toronto, Ontario, Canada}

\email{jeffrey@math.toronto.edu}

\subjclass[2010]{53D17, 14H60}

\keywords{Character variety, logarithmic connection, monodromy, Poisson structure}

\date{}

\begin{abstract}
Let $G$ be a complex reductive group and $D\, \subset\, X$ a finite subset of a compact Riemann surface $X$.
It was shown in \cite{BJ} that the moduli space of $G$--characters of $\pi_1(X\setminus D)$ has a
natural Poisson structure. We show that the moduli space of logarithmic $G$--connections on $X$ singular over $D$
has a Poisson structure. It is proved that the monodromy map from the moduli space of logarithmic $G$--connections
to the moduli space of $G$--characters is Poisson structure preserving.
\end{abstract}

\maketitle

\section{Introduction}\label{sec1}

Let $X$ be a $C^\infty$ compact oriented surface and $D\, \subset\, X$ a finite subset.
Let $G$ be a reductive Lie group. In \cite{BJ} it was shown that the $G$--character variety
${\mathcal R}(G)$ for $\pi_1(X\setminus D)$ has a natural Poisson structure. When $G$ is
an algebraic group, the $G$--character variety ${\mathcal R}(G)$ has an algebraic structure,
because the group $\pi_1(X\setminus D)$ is finitely presented. In that case, the
Poisson structure on ${\mathcal R}(G)$ is in fact algebraic.

Now take $X$ to be a compact connected Riemann surface; as before, $D\, \subset\, X$ is a finite
subset. Let $G$ be a complex reductive algebraic group. Denote by ${\mathcal M}_X(G)$ the moduli
space of irreducible logarithmic $G$--connections on $X$ singular over $D$. We show that there
is an algebraic homomorphism
$$
\Psi\,\ :\, \ T^*{\mathcal M}_X(G)\,\ \longrightarrow\,\ T{\mathcal M}_X(G)
$$
(see \eqref{e10}).

Let
$$
F\,\ :\,\ {\mathcal M}_X(G)\, \ \longrightarrow\,\ {\mathcal R}(G)
$$
be the map that sends any logarithmic connection to its monodromy homomorphism. This map $F$
is a biholomorphism, but it is not algebraic \cite{Si1}, \cite{Si2} (the variety ${\mathcal R}(G)$
is affine, while the Krull dimension of the space of regular functions on ${\mathcal M}_X(G)$
is strictly less than the dimension of ${\mathcal M}_X(G)$ \cite{BIKS}, \cite{BRag}).

We prove that $F$ takes the homomorphism $\Psi$ to the homomorphism
$\Phi\, :\, T^*{\mathcal R}(G)\,\ \longrightarrow\,\ T{\mathcal R}(G)$ defining the Poisson structure on
${\mathcal R}(G)$; see Theorem \ref{thm1}.

As a consequence, $\Psi$ is a Poisson structure on ${\mathcal M}_X(G)$; see Corollary \ref{cor1}.

It may be noted that although the map $F$ is not algebraic, it still takes the algebraic homomorphism
$\Psi$ to the algebraic homomorphism $\Phi$.

When $D$ is the empty subset, then $\Phi$ and $\Psi$ are isomorphisms, and they define algebraic
symplectic structures on ${\mathcal R}(G)$ and ${\mathcal M}_X(G)$ respectively. In that case,
Theorem \ref{thm1} was proved earlier in \cite{Bi}.

\section{Moduli space of connections}\label{sec2}

Let $X$ be a compact connected Riemann surface. Its holomorphic tangent and cotangent bundles
will be denoted by $TX$ and $K_X$ respectively.
(Note that $K_X$ is the top exterior power of the holomorphic cotangent bundle,
but since $X$ has complex dimension $1$ its holomorphic cotangent bundle
has complex dimension $1$ so it is the same as $K_X$.)
Fix finitely many points
\begin{equation}\label{e1}
D\ :=\ \{x_1, \, \cdots, \, x_m\}\ \subset\ X
\end{equation}
of $X$. Let
\begin{equation}\label{e2}
X_0\ :=\ X\,\setminus\, \{x_1, \, \cdots, \, x_m\} \ =\ X\,\setminus\, D
\end{equation}
be the $m$-punctured Riemann surface.

For a holomorphic vector bundle $\mathcal V$ on $X$, the
vector bundles ${\mathcal V}\otimes{\mathcal O}_X(D)$ and ${\mathcal V}\otimes{\mathcal O}_X(-D)$ 
will be denoted by ${\mathcal V}(D)$ and ${\mathcal V}(-D)$ respectively.

Let $G$ be a complex reductive affine algebraic group. The Lie algebra of $G$ will
be denoted by $\mathfrak g$. Let
\begin{equation}\label{e3}
p\ :\ E_G\ \longrightarrow\ X
\end{equation}
be a holomorphic principal $G$--bundle on $X$.
Its adjoint vector bundle $\text{ad}(E_G)$ is the holomorphic vector bundle on $X$
associated to $E_G$ for the adjoint action of $G$ on its Lie algebra $\mathfrak g$.
Since $G$ is reductive, there is a nondegenerate $G$--invariant bilinear form
$$
\sigma\ :\ \text{Sym}^2(\mathfrak g)\ \longrightarrow\ {\mathbb C}.
$$
Since $\sigma$ is $G$--invariant, it produces a bilinear pairing
\begin{equation}\label{e3b}
\widehat{\sigma}\ :\ \text{Sym}^2(\text{ad}(E_G))\ \longrightarrow\ {\mathcal O}_X.
\end{equation}
We note that $\widehat{\sigma}$ is fiberwise nondegenerate, because $\sigma$ is
nondegenerate.

Consider the action of $G$ on the holomorphic vector bundle $(TE_G)\otimes
{\mathcal O}_{E_G}(-p^{-1}(D))$, where $D$ and $p$ are as in \eqref{e1} and \eqref{e3}
respectively. The corresponding quotient $${\rm At}(E_G)(-\log D)\, :=\, ((TE_G)\otimes{\mathcal O}_{E_G}
(-p^{-1}(D)))/G$$ is a holomorphic vector bundle on $X$ \cite{De}, \cite{At}. This vector bundle
${\rm At}(E_G)(-\log D)$ fits in the following short exact sequence:

\begin{equation}\label{e4}
0 \, \longrightarrow\, \text{ad}(E_G) \, \longrightarrow\, {\rm At}(E_G)(-\log D)
\, \stackrel{p'}{\longrightarrow}\, TX(-\log D)\, \longrightarrow\, 0.
\end{equation}
Here
\begin{equation} \label{log} TX(-\log D)\ =\ TX(-D).
\end{equation}
Also, $p'$ is given by the differential $dp\, :\, TE_G\, \longrightarrow\, p^*TX$ of the
map $p$ in \eqref{e3}.

A \textit{logarithmic connection} on $E_G$ singular over $D$ (see \eqref{e1}) is a holomorphic
homomorphism
$$
\nabla'\ :\ TX(-\log D)\ \longrightarrow\ {\rm At}(E_G)(-\log D)
$$
such that $p'\circ \nabla'\,=\, \text{Id}_{TX(-\log D)}$, where $p'$ is the projection in \eqref{e4}
\cite{De}, \cite{At}. A logarithmic connection $\nabla'$ is called \textit{reducible} if there is a
proper parabolic subgroup $P\, \subset\, G$, and a holomorphic reduction of the structure group $E_P\,
\subset\, E_G$ of $E_G$ to $P$, such that there is a logarithmic connection $\nabla^P$ with the
property that the logarithmic connection on $E_G$ induced by $\nabla^P$ coincides with $\nabla$. A
logarithmic connection $\nabla'$ is called \textit{irreducible} if it is not reducible.

Let
\begin{equation}\label{em}
{\mathcal M}_X(G)
\end{equation}
denote the moduli space of irreducible logarithmic $G$--connections
on $X$ singular over $D$; see \cite{Ni}, \cite{Si1}, \cite{Si2} for the construction
of this moduli space.

Take a holomorphic principal $G$--bundle $E_G$ on $X$ equipped with a logarithmic connection $\nabla'$
singular over $D$. The logarithmic connection on $\text{ad}(E_G)$ induced by $\nabla'$ will be
denoted by $\nabla$. So $\nabla$ is a holomorphic differential operator of order one
\begin{equation}\label{e5}
\nabla\ :\ \text{ad}(E_G)\ \longrightarrow\ \text{ad}(E_G)\otimes K_X (D)\ :=\
\text{ad}(E_G)\otimes K_X \otimes{\mathcal O}_{X}(D),
\end{equation}
where $K_X$ is the holomorphic cotangent bundle of $X$, that satisfies the Leibniz identity. Let
${\mathcal C}_\bullet$ be the following two-term complex of sheaves on $X$:
\begin{equation}\label{e6}
{\mathcal C}_\bullet\ :\ {\mathcal C}_0\ :=\ \text{ad}(E_G) \ \stackrel{\nabla}{\longrightarrow}\
{\mathcal C}_1\ :=\ \text{ad}(E_G)\otimes K_X(D),
\end{equation}
where $\nabla$ is the differential operator in \eqref{e5}. The space of infinitesimal deformations of
the pair $(E_G,\, \nabla')$ are parametrized by the first hypercohomology ${\mathbb H}^1({\mathcal C}_\bullet)$,
where ${\mathcal C}_\bullet$ is the complex in \eqref{e6} \cite[p.~554, Proposition 3.2(1)]{ABDH}
\cite[p.~1415, Proposition 4.4]{Ch}, \cite{BR}.

The operator $\nabla$ in \eqref{e5} sends the subsheaf $\text{ad}(E_G)\otimes{\mathcal O}_{X}(-D)\,
\subset\, \text{ad}(E_G)$ to the subsheaf $\text{ad}(E_G)\otimes K_X\, \subset\, \text{ad}(E_G)\otimes K_X(D)$.
Indeed, this follows immediately from the Leibniz identity. Consequently, from \eqref{e6}
we have the two-term complex ${\mathcal C}'_\bullet$ of sheaves on $X$
\begin{equation}\label{e7}
{\mathcal C}'_\bullet\ :\ {\mathcal C}'_0\ :=\ \text{ad}(E_G)\otimes{\mathcal O}_{X}(-D) \
\stackrel{\nabla}{\longrightarrow}\ {\mathcal C}'_1\ :=\ \text{ad}(E_G)\otimes K_X;
\end{equation}
the restriction of $\nabla$ to $\text{ad}(E_G)\otimes{\mathcal O}_{X}(-D)\, \subset\,
\text{ad}(E_G)$ is also denoted by $\nabla$.

Using the pairing $\widehat{\sigma}$ in \eqref{e3b} it follows that ${\mathcal C}'_\bullet$ in \eqref{e7}
is the Serre dual complex of ${\mathcal C}_\bullet$ in \eqref{e6}. So Serre duality gives the following:
\begin{equation}\label{e8}
{\mathbb H}^1({\mathcal C}_\bullet)^*\,\ =\,\ {\mathbb H}^1({\mathcal C}'_\bullet).
\end{equation}

Consider the commutative diagram
$$
\begin{matrix}
{\mathcal C}'_\bullet & :\ \,& \text{ad}(E_G)\otimes{\mathcal O}_{X}(-D) & \stackrel{\nabla}{\longrightarrow}
& \text{ad}(E_G)\otimes K_X\\
\,\, \Big\downarrow\phi && \,\,\, \Big\downarrow\phi_0 && \,\,\,\Big\downarrow\phi_1\\
{\mathcal C}_\bullet & :\ \,& \text{ad}(E_G) & \stackrel{\nabla}{\longrightarrow}
& \text{ad}(E_G)\otimes K_X (D)
\end{matrix}
$$
where $\phi_0$ and $\phi_1$ are the inclusion maps. From the commutativity of the above diagram it follows
that $\phi$ induces a homomorphism
\begin{equation}\label{e9}
\phi_*\, \ :\,\ {\mathbb H}^1({\mathcal C}'_\bullet)\,\ \longrightarrow \,\ {\mathbb H}^1({\mathcal C}_\bullet)
\end{equation}

Take any $(E_G,\, \nabla')\, \in\, {\mathcal M}_X(G)$ (see \eqref{em}). Since the
space of infinitesimal deformations of the pair $(E_G,\, \nabla')$ are parametrized by
${\mathbb H}^1({\mathcal C}_\bullet)$, we have
\begin{equation}\label{e9a}
T_{(E_G,\, \nabla')}{\mathcal M}_X(G)\ =\ {\mathbb H}^1({\mathcal C}_\bullet).
\end{equation}
So from \eqref{e8} it follows that
\begin{equation}\label{e9b}
T^*_{(E_G,\, \nabla')}{\mathcal M}_X(G)\ =\ {\mathbb H}^1({\mathcal C}'_\bullet).
\end{equation}
Consequently, the pointwise homomorphism in \eqref{e9} produces a ${\mathcal O}_{{\mathcal M}_X(G)}$--linear
homomorphism
\begin{equation}\label{e10}
\Psi\,\ :\, \ T^*{\mathcal M}_X(G)\,\ \longrightarrow\,\ T{\mathcal M}_X(G).
\end{equation}

\section{Poisson structure on a character variety}

Consider $X_0$ in \eqref{e2}. Fix a base point $z_0\, \in\, X_0$. A homomorphism
$\rho\, :\, \pi_1(X_0,\, z_0)\, \longrightarrow \, G$ is called \textit{irreducible} if
the image $\rho(\pi_1(X_0,\, z_0))$ is not contained in some proper parabolic subgroup of $G$.
Let $\text{Hom}^{\rm irr}(\pi_1(X_0,\, z_0),\, G)$ denote the space of all irreducible
homomorphisms from $\pi_1(X_0,\, z_0)$ to $G$. The conjugation action of $G$ on itself produces
an action of $G$ on $\text{Hom}^{\rm irr}(\pi_1(X_0,\, z_0),\, G)$. The corresponding quotient
\begin{equation}\label{e11}
{\mathcal R}(G)\,\ :=\,\, \text{Hom}^{\rm irr}(\pi_1(X_0,\, z_0),\, G)/G
\end{equation}
is called a character variety.

In \cite{BJ} it was shown that ${\mathcal R}(G)$ has a natural Poisson structure
\begin{equation}\label{e12}
\Phi\,\ :\, \ T^*{\mathcal R}(G)\,\ \longrightarrow\,\ T{\mathcal R}(G)
\end{equation}
(see \cite[(2.11)]{BJ}, \cite[Section 3.2]{BJ}).

Take a holomorphic principal $G$--bundle $E_G$ on $X$ equipped with a logarithmic connection $\nabla'$.
Fix a point $p_0\, \in\, (E_G)_{z_0}$ in the fiber of $E_G$ over $z_0$. Identify $(E_G)_{z_0}$ with
$G$ by sending any $g\, \in\, G$ to $p_0g\, \in\, (E_G)_{z_0}$. Consider the monodromy homomorphism
$$
\rho(\nabla')\ :\ \pi_1(X_0,\, z_0)\ \longrightarrow\ G
$$
for the connection $\nabla'$. Note that $\nabla'$ is irreducible if and only if the monodromy
homomorphism $\rho(\nabla')$ is irreducible. Let
\begin{equation}\label{e13}
F\,\ :\,\ {\mathcal M}_X(G)\, \ \longrightarrow\,\ {\mathcal R}(G)
\end{equation}
be the map that sends any logarithmic connection to its monodromy homomorphism; it is known as the
Riemann--Hilbert correspondence. We note that
$F$ in \eqref{e13} is independent of the choices of $z_0\, \in\, X_0$ and $p_0$.

Since $\pi_1(X_0,\, z_0)$ is a finitely presented group, the algebraic structure of $G$ produces
an algebraic structure on ${\mathcal R}(G)$. As $G$ is a complex affine algebraic group, this
${\mathcal R}(G)$ is a complex affine variety; it is known as the Betti moduli space \cite{Si1},
\cite{Si2}. We recall that ${\mathcal M}_X(G)$ is called the de Rham moduli space \cite{Si1},
\cite{Si2}. The map $F$ in \eqref{e13} is locally a biholomorphism, but it is not algebraic.
The homomorphism $\Psi$ in \eqref{e10} is algebraic, and $\Phi$ in \eqref{e12} is algebraic.

\begin{theorem}\label{thm1}
The map $F$ in \eqref{e13} intertwines the homomorphisms $\Psi$ and $\Phi$ (see \eqref{e10} and
\eqref{e12}), in other words, for any $\alpha
\, :=\, (E_G,\, \nabla')\, \in\, {\mathcal M}_X(G)$,
$$
(dF)_{\alpha} \circ \Psi_\alpha\circ (dF)^*_{F(\alpha)} \ \ =\ \ \Phi_{F(\alpha)}
$$
as homomorphisms from the cotangent space $T^*_{F(\alpha)} {\mathcal R}(G)$ to $T_{F(\alpha)} {\mathcal R}(G)$,
where $(dF)_{\alpha}\, :\, T_\alpha {\mathcal M}_X(G)\, \longrightarrow\, T_{F(\alpha)}
{\mathcal R}(G)$ is the differential of $F$ at $\alpha$, and $(dF)^*_{F(\alpha)}$ is its dual;
the homomorphism $\Psi_\alpha$ (respectively, $\Phi_{F(\alpha)}$) is the restriction of
$\Psi$ (respectively, $\Phi$) to $\alpha$ (respectively, $F(\alpha)$).
\end{theorem}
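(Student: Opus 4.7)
The plan is to reduce both sides to the same natural map on the Betti cohomology of $X_0$. Concretely, I will identify the de Rham tangent and cotangent spaces at $\alpha$ with the topological cohomology of the adjoint local system $\mathcal{L}$ on $X_0$ (ordinary and compactly supported, respectively), and then show that under these identifications (i) $\Psi_\alpha$ is the canonical ``forget support'' map $H^1_c(X_0,\mathcal{L}) \to H^1(X_0,\mathcal{L})$; (ii) the differential $(dF)_\alpha$ is the standard comparison with group cohomology of $\pi_1(X_0,z_0)$; and (iii) the Poisson tensor $\Phi_{F(\alpha)}$ of \cite{BJ} is itself defined by the same ``forget support'' map, read through this comparison.

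For step (i), let $\mathcal{L} := \ker(\nabla|_{X_0})$ be the local system underlying the flat bundle $(\mathrm{ad}(E_G),\nabla)$ on $X_0$. Deligne's logarithmic comparison theorem provides canonical isomorphisms
$$
\mathbb{H}^1(\mathcal{C}_\bullet)\,\cong\,H^1(X_0,\mathcal{L}),\qquad
\mathbb{H}^1(\mathcal{C}'_\bullet)\,\cong\,H^1_c(X_0,\mathcal{L}),
$$
the second arising because twisting the first term of the logarithmic complex by $\mathcal{O}_X(-D)$ enforces vanishing along $D$. Under these isomorphisms the Serre-duality pairing \eqref{e8} becomes the Poincar\'e--Lefschetz pairing on $X_0$ (using $\mathcal{L}\cong\mathcal{L}^\vee$ via $\widehat{\sigma}$), and the morphism of complexes built from the inclusions $\phi_0,\phi_1$ that defines $\phi_*$ becomes the canonical map $H^1_c(X_0,\mathcal{L}) \to H^1(X_0,\mathcal{L})$ by functoriality of Deligne's comparison. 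For (ii), because $X_0$ is a $K(\pi_1(X_0,z_0),1)$ space, one has the canonical identification $H^*(X_0,\mathcal{L}) \cong H^*(\pi_1(X_0,z_0),\mathfrak{g}_{\mathrm{Ad}\,\rho})$ with $\rho=F(\alpha)$, and this is exactly the infinitesimal Riemann--Hilbert correspondence, hence realized by $(dF)_\alpha$; dually, $(dF)^*_{F(\alpha)}$ realizes the corresponding identification on $H^1_c$.

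For step (iii), the Poisson tensor $\Phi_{F(\alpha)}$ constructed in \cite[(2.11)]{BJ} and \cite[\S 3.2]{BJ} is, by its very definition, the Goldman-type bivector associated, via $\sigma$, to the natural map from cohomology supported away from the punctures to ordinary group cohomology; translated via the $K(\pi,1)$ identification, this is precisely the ``forget support'' map $H^1_c(X_0,\mathcal{L}) \to H^1(X_0,\mathcal{L})$. Combining steps (i)--(iii), both $\Phi_{F(\alpha)}$ and $(dF)_\alpha\circ\Psi_\alpha\circ(dF)^*_{F(\alpha)}$ are realized by the same map, which proves the theorem. The main obstacle will be step (iii): unwinding the explicit cocycle formula for $\Phi$ from \cite{BJ} and matching it term by term with the de Rham--theoretic $\phi_*$. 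A minor technical point in step (i) is that Deligne's comparison is sharpest for a canonical extension with residues in a fixed fundamental domain, but since only the induced maps on hypercohomology/tangent spaces are needed, this is not a genuine obstruction.
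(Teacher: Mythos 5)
Your overall strategy coincides with the paper's: reduce all four maps to the natural ``forget supports'' map $H^1_c(X_0,\mathcal{L})\to H^1(X_0,\mathcal{L})$; your step (iii) is indeed how $\Phi$ is defined in \cite{BJ}, and your step (ii) is the same identification the paper makes. The genuine gap is in step (i). Deligne's logarithmic comparison theorem does not apply to an arbitrary point $\alpha=(E_G,\nabla')$ of ${\mathcal M}_X(G)$: the quasi-isomorphisms identifying ${\mathcal C}_\bullet$ with $Rj_*\mathcal{L}$ and ${\mathcal C}'_\bullet$ with $j_!\mathcal{L}$ (hence ${\mathbb H}^1({\mathcal C}_\bullet)\cong H^1(X_0,\mathcal{L})$ and ${\mathbb H}^1({\mathcal C}'_\bullet)\cong H^1_c(X_0,\mathcal{L})$) require that no eigenvalue of the residue of the adjoint connection at any point of $D$ be a nonzero integer of the relevant sign (positive integers obstruct one comparison, negative integers the other). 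The eigenvalues of $\mathrm{Res}_{x_i}(\mathrm{ad}\,\nabla')$ are the differences of the eigenvalues of $\mathrm{Res}_{x_i}(\nabla')$, and this spectrum is symmetric under $\lambda\mapsto-\lambda$; so at any resonant point (two residue eigenvalues of $\nabla'$ differing by a nonzero integer, which certainly occurs in ${\mathcal M}_X(G)$) both hypotheses fail simultaneously, and your ``canonical isomorphisms'' are not available --- this is exactly the resonance phenomenon studied in \cite{ABDH}, and it is where the interesting subtleties of the logarithmic moduli space live. Your closing remark that this is merely a matter of choosing a canonical extension with residues in a fundamental domain misdiagnoses the issue: the logarithmic extension is part of the data of the point $\alpha$ and cannot be renormalized.

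The repair --- and this is what the paper actually does --- is to observe that the identity to be proved needs no isomorphisms at all. There are comparison \emph{morphisms} defined for every residue: restriction to $X_0$ gives ${\mathbb H}^1({\mathcal C}_\bullet)\to H^1(X_0,\mathcal{L})$, and extension by zero of compactly supported smooth representatives gives $H^1_c(X_0,\mathcal{L})\to {\mathbb H}^1({\mathcal C}'_\bullet)$. Working with the Dolbeault resolutions \eqref{e15} and \eqref{e18} and the equality $\nabla^\rho=(\nabla''+\overline{\partial}'_E)$ on $X_0$, one identifies these morphisms with $(dF)_\alpha$ and $(dF)^*_{F(\alpha)}$ respectively (the second identification is not pure functoriality: it uses the compatibility of the Serre duality \eqref{e8} with the Poincar\'e--Lefschetz pairing, a point you should spell out rather than wave at). Composing with $\phi_*=\Psi_\alpha$ then visibly sends the class of a compactly supported $\nabla^\rho$-closed $1$-form $\omega$ to its ordinary class, i.e. to $\Phi_{F(\alpha)}([\omega])$, for every $\alpha$, resonant or not. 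Alternatively you could try to prove the identity only at non-resonant $\alpha$ via your comparison isomorphisms and extend by continuity/density of the non-resonant locus, but that extra argument is not in your proposal and would still need the duality compatibility above.
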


\begin{proof}
Take any $\alpha \, :=\, (E_G,\, \nabla')\, \in\, {\mathcal M}_X(G)$. Denote
\begin{equation}\label{e14}
\rho\ \, =\ \, F(\alpha)\ \, \in\ \, {\mathcal R}(G).
\end{equation}
We will give the Dolbeault (respectively, de Rham) descriptions of
$T_\alpha {\mathcal M}_X(G)$ and $T^*_\alpha {\mathcal M}_X(G)$ (respectively, $T_{\rho} {\mathcal R}(G)$ 
and $T^*_{\rho} {\mathcal R}(G)$).

For a holomorphic vector bundle $\mathcal V$ on $X$, denote by $\Omega^{p,q}_X({\mathcal V})$ the
$C^\infty$ vector bundle $(\bigwedge^p T^{1,0}X)^*\otimes (\bigwedge^q T^{0,1}X)^*\otimes {\mathcal V}$
on $X$ given by the $(p,\,q)$--forms with values in $\mathcal V$. Note that
$\Omega^{p,0}_X({\mathcal V})$ is a holomorphic vector bundle. Let
\begin{equation}\label{ed}
\overline{\partial}_E\ :\ \Omega^{0,0}_X (\text{ad}(E_G))\ \longrightarrow\
\Omega^{0,1}_X(\text{ad}(E_G))
\end{equation}
be the Dolbeault operator on the holomorphic vector bundle $\text{ad}(E_G)$. Similarly,
$$
\overline{\partial}'_E\ :\ \Omega^{1,0}_X (\text{ad}(E_G)\otimes{\mathcal O}_X(D))\ \longrightarrow\
\Omega^{1,1}_X (\text{ad}(E_G)\otimes{\mathcal O}_X(D))
$$
is the Dolbeault operator on the holomorphic vector bundle $\text{ad}(E_G)\otimes K_X(D)$.
Consider the complex of sheaves ${\mathcal C}_\bullet$ in \eqref{e6}. It has the following
Dolbeault resolution:
\begin{equation}\label{e15}
\begin{matrix}
0&& 0\\
\Big\downarrow && \Big\downarrow\\
\text{ad}(E_G) & \stackrel{\nabla}{\longrightarrow} & \text{ad}(E_G)\otimes K_X(D)\\
\Big\downarrow && \Big\downarrow\\
\Omega^{0,0}_X(\text{ad}(E_G))& \stackrel{\nabla'}{\longrightarrow} & \Omega^{1,0}_X
(\text{ad}(E_G)\otimes{\mathcal O}_X(D))\\
\,\,\,\, \Big\downarrow \overline{\partial}_E && \,\,\,\, \Big\downarrow\overline{\partial}'_E\\
\Omega^{0,1}_X (\text{ad}(E_G))
& \stackrel{\nabla''}{\longrightarrow} & \Omega^{1,1}_X(\text{ad}(E_G)\otimes{\mathcal O}_X(D))\\
\Big\downarrow && \Big\downarrow\\
0&& 0
\end{matrix}
\end{equation}
where $\nabla'$ and $\nabla''$ are given by $\nabla$. We have the complex
\begin{equation} \label{e16}
0\, \longrightarrow\, {\textbf A} \, \xrightarrow{\,\, \overline{\partial}_E
\oplus \nabla'\,\,}\,{\textbf B}\,
\xrightarrow{\,\,\, \nabla''+ \overline{\partial}'_E\,\,\, }\,\,
{\textbf C} \,\, \longrightarrow\,\, 0,
\end{equation}
where
\begin{equation} \label{adef} {\textbf A}\ :=\ C^\infty(X,\, \text{ad}(E_G)),
\end{equation}
\begin{equation} \label{bdef} 
{\textbf B}\ :=\ C^\infty(X,\, \Omega^{0,1}_X(\text{ad}(E_G)))
\oplus C^\infty(X,\, \text{ad}(E_G)\otimes K_X(D)), \end{equation}
and 
\begin{equation} \label{cdef} 
{\textbf C}\ :=\ C^\infty(X,\, \Omega^{1,1}_X(\text{ad}(E_G)(D))).
\end{equation}
Since \eqref{e15} is a fine resolution of ${\mathcal C}_{\bullet}$, from \eqref{e16} we have
\begin{equation}\label{e17}
{\mathbb H}^1({\mathcal C}_{\bullet})\,=\, \frac{\text{kernel}(\nabla''+
\overline{\partial}'_E)}{(\overline{\partial}_E\oplus \nabla')(C^\infty(X,\, \text{ad}(E_G)))}.
\end{equation}

The complex ${\mathcal C}'_\bullet$ in \eqref{e7} has the following Dolbeault resolution:
\begin{equation}\label{e18}
\begin{matrix}
0&& 0\\
\Big\downarrow && \Big\downarrow\\
\text{ad}(E_G)\otimes {\mathcal O}_X(-D) & \stackrel{\nabla}{\longrightarrow} & \text{ad}(E_G)\otimes K_X\\
\Big\downarrow && \Big\downarrow\\
\Omega^{0,0}_X(\text{ad}(E_G)\otimes {\mathcal O}_X(-D))& \stackrel{\nabla'}{\longrightarrow} & \Omega^{1,0}_X
(\text{ad}(E_G))\\
\,\,\,\, \Big\downarrow \overline{\partial}_E && \,\,\,\, \Big\downarrow\overline{\partial}'_E\\
\Omega^{0,1}_X (\text{ad}(E_G)\otimes {\mathcal O}_X(-D))
& \stackrel{\nabla''}{\longrightarrow} & \Omega^{1,1}_X(\text{ad}(E_G))\\
\Big\downarrow && \Big\downarrow\\
0&& 0
\end{matrix}
\end{equation}
We have the complex
\begin{equation} \label{e19}
0\,\longrightarrow\,{\textbf A}'\,\, \xrightarrow{\,\, \overline{\partial}_E
\oplus \nabla'\,\,}\,{\textbf B}'\,\,
\,\, \xrightarrow{\,\,\, \delta\, :=\, \nabla''+ \overline{\partial}'_E\,\,\, }\,\,
{\textbf C}' \,\, \longrightarrow\,\, 0,
\end{equation}
where
\begin{equation} \label{apdef}
{\textbf A}'\ :=\ C^\infty(X,\,\text{ad}(E_G)(-D)),\end{equation}

\begin{equation}\label{bpdef}
{\textbf B}'\ :=\ C^\infty(X,\, \Omega^{0,1}_X(\text{ad}(E_G)(-D)))\oplus
 C^\infty(X,\, \text{ad}(E_G)\otimes K_X ), \end{equation}
 and
\begin{equation} \label{cpdef}
{\textbf C}'\ :=\ C^\infty(X,\, \Omega^{1,1}_X(\text{ad}(E_G))).
\end{equation}
In \eqref{e19} the notation $\delta$ is introduced in order to distinguish between the map
$\nabla''+ \overline{\partial}'_E$ acting on $C^\infty(X,\, \Omega^{0,1}_X(\text{ad}(E_G)))
\oplus C^\infty(X,\, \text{ad}(E_G)\otimes K_X)$ and on its subspace
$$C^\infty(X,\, \Omega^{0,1}_X(\text{ad}(E_G)(-D)))\oplus C^\infty(X,\, \text{ad}(E_G)\otimes K_X).$$
Since \eqref{e18} is a fine resolution of ${\mathcal C}'_{\bullet}$, from \eqref{e19} we have
\begin{equation}\label{e20}
{\mathbb H}^1({\mathcal C}'_{\bullet})\,=\, \frac{\text{kernel}(\delta)}{(\overline{\partial}_E\oplus
\nabla')(C^\infty(X,\, \text{ad}(E_G)(-D)))}.
\end{equation}

Consider the commutative diagram
$$
\begin{matrix}
0 & \longrightarrow & {\textbf A}' & \xrightarrow{\,\, \overline{\partial}_E\oplus
\nabla'\,\,}& {\textbf B}' & \xrightarrow{\,\, \nabla''+ \overline{\partial}'_E\,\,} & {\textbf C}'
& \longrightarrow & 0\\
&& \Big\downarrow && \Big\downarrow && \Big\downarrow \\
0 & \longrightarrow & {\textbf A} & \xrightarrow{\,\, \overline{\partial}_E\oplus
\nabla'\,\,} & {\textbf B} & \xrightarrow{\,\, \,\delta\,\,\,} & {\textbf C} 
& \longrightarrow & 0
\end{matrix}
$$
(see \eqref{e16} and \eqref{e19}). From this we have a homomorphism
\begin{equation}\label{e21}
\frac{\text{kernel}(\delta)}{(\overline{\partial}_E\oplus
\nabla')(C^\infty(X,\, \text{ad}(E_G)(-D)))}\,\, \longrightarrow\,\,
\frac{\text{kernel}(\nabla''+
\overline{\partial}'_E)}{(\overline{\partial}_E\oplus \nabla')(C^\infty(X,\, \text{ad}(E_G)))}.
\end{equation}
In view of \eqref{e17} and \eqref{e20}, the homomorphism in \eqref{e21} produces a homomorphism
\begin{equation}\label{e22}
{\mathbb H}^1({\mathcal C}'_{\bullet})\,\, \longrightarrow\,\, {\mathbb H}^1({\mathcal C}_{\bullet}).
\end{equation}

The homomorphism in \eqref{e9} clearly coincides with the homomorphism in \eqref{e22}.

Now take $\rho$ in \eqref{e14}. We will recall a description of $T_\rho {\mathcal R}(G)$.
Note that $\rho$ gives a $C^\infty$ principal $G$--bundle $E'_G$ on $X_0$ (see \eqref{e2} for $X_0$)
equipped with a flat connection $\nabla^\rho_1$. Let $\nabla^\rho$ be the flat connection on the
adjoint vector bundle $\text{ad}(E'_G)$ induced by $\nabla^\rho_1$. We note that
$E'_G\,=\, E_G\big\vert_{X_0}$, and hence $\text{ad}(E'_G)\,=\, \text{ad}(E_G)\big\vert_{X_0}$.
Also, the connection $\nabla^\rho$ coincides with the
flat connection $(\nabla+\overline{\partial}_E)\big\vert_{X_0}$, where $\overline{\partial}_E$
is the Dolbeault operator on $\text{ad}(E_G)$ (see \eqref{ed}).

Let $\underline{\text{ad}}(E'_G)$ be the locally constant sheaf on $X_0$ given by the sheaf of flat
sections of the flat connection $\nabla+\overline{\partial}_E$ on $\text{ad}(E_G)\big\vert_{X_0}$. Then
\begin{equation}\label{e9d}
T_\rho {\mathcal R}(G)\,=\, H^1(X_0,\, \underline{\text{ad}}(E'_G)),\ \ \,
T^*_\rho {\mathcal R}(G)\,=\, H^1_c(X_0,\, \underline{\text{ad}}(E'_G)),
\end{equation}
where $H^1_c(X_0,\, \underline{\text{ad}}(E_G))$ denotes compactly supported first cohomology
of $\underline{\text{ad}}(E'_G)$; see \cite{AB}, \cite{Go}, \cite{BJ}. The map
$$
\Phi (\rho)\ :\ T^*_\rho {\mathcal R}(G)\ \longrightarrow\ T_\rho {\mathcal R}(G)
$$
in \eqref{e12} is given by the natural homomorphism $H^1_c(X_0,\, \underline{\text{ad}}(E'_G))\,
\longrightarrow\, H^1(X_0,\, \underline{\text{ad}}(E'_G))$ \cite[(2.9)]{BJ} (in \cite{BJ}
$\Phi (\rho)$ is denoted by $\widetilde{\Phi}_{\rho}$).

We will now describe $H^1(X_0,\, \underline{\text{ad}}(E'_G))$ and $H^1_c(X_0,\, \underline{\text{ad}}(E'_G))$.
Consider the following exact sequence:
\begin{equation}\label{e23}
0\, \longrightarrow\, \underline{\text{ad}}(E'_G) \, \longrightarrow\, \text{ad}(E'_G)
\, \xrightarrow{\,\,\beta_1 := \nabla^\rho\,\,}\, \text{ad}(E'_G)\otimes T^*X_0
\, \xrightarrow{\,\,\beta_2 := \nabla^\rho\,\,}\, \text{ad}(E'_G)\otimes \bigwedge\nolimits^2 T^*X_0
\, \longrightarrow\, 0.
\end{equation}
The exact sequence in \eqref{e23} produces a complex
\begin{equation}\label{e23b}
0\,\, \longrightarrow\,\, C^\infty(X_0,\, \text{ad}(E'_G))
\,\, \xrightarrow{\,\,\,\beta_1 := \nabla^\rho\,\,\,}\,\, 
C^\infty(X_0,\, \text{ad}(E'_G)\otimes T^*X_0)
\end{equation}
$$
\,\, \xrightarrow{\,\,\,\beta_2 := \nabla^\rho\,\,\,}\,\,
C^\infty(X_0,\, \text{ad}(E'_G)\otimes \bigwedge\nolimits^2 T^*X_0)\,\, \longrightarrow\,\, 0.
$$

Since $\nabla^\rho$ is a flat connection, the exact sequence in \eqref{e23} is a fine resolution of the
locally constant sheaf $\underline{\text{ad}}(E'_G)$, and hence
\begin{equation}\label{e24}
H^1(X_0,\, \underline{\text{ad}}(E'_G))\,\ =\,\ \frac{\text{kernel}(\beta_2)}{\text{image}(\beta_1)};
\end{equation}
the homomorphisms $\beta_1$ and $\beta_2$ are as in \eqref{e23b}.

For $j\, \geq\, 0$, let
\begin{equation}\label{j1}
C^\infty_c(X_0,\, \text{ad}(E'_G)\otimes \bigwedge\nolimits^j T^*X_0)\ \hookrightarrow\
C^\infty(X_0,\, \text{ad}(E'_G)\otimes \bigwedge\nolimits^j T^*X_0)
\end{equation}
be the $C^\infty$ compactly supported $j$--forms on $X_0$ with values in $\text{ad}(E'_G)$. From
\eqref{e23b} we have the complex
\begin{equation}\label{e25}
0\,\, \longrightarrow\, C^\infty_c (X_0,\, \text{ad}(E'_G))
\,\, \xrightarrow{\,\,\,\beta^c_1\,\,\, }\,\, 
C^\infty_c (X_0,\, \text{ad}(E'_G)\otimes T^*X_0)
\end{equation}
$$
\xrightarrow{\,\,\,\beta^c_2\,\,\,}\,\,
C^\infty_c (X_0,\, \text{ad}(E'_G)\otimes \bigwedge\nolimits^2 T^*X_0)\,\, \longrightarrow\,\, 0,
$$
where $\beta^c_1$ and $\beta^c_2$ respectively are the restrictions of $\beta_1$ and $\beta_2$
(see \eqref{e23b} for $\beta_1$ and $\beta_2$). Now we have
\begin{equation}\label{e26}
H^1_c(X_0,\, \underline{\text{ad}}(E'_G))\,\ =\,\ \frac{\text{kernel}(\beta^c_2)}{\text{image}(\beta^c_1)};
\end{equation}
the homomorphisms $\beta^c_1$ and $\beta^c_2$ are as in \eqref{e25}.

Consider the homomorphism $\Phi (\rho)\, :\, T^*_\rho {\mathcal R}(G)\, \longrightarrow\,
T_\rho{\mathcal R}(G)$ in \eqref{e12}. Using the isomorphisms in \eqref{e26} and \eqref{e24}, this
$\Phi (\rho)$ is transformed into a homomorphism
\begin{equation}\label{e26b}
\widetilde{\Phi} (\rho)\,\ :\,\ \frac{\text{kernel}(\beta^c_2)}{\text{image}(\beta^c_1)}\,\ \longrightarrow\,\
\frac{\text{kernel}(\beta_2)}{\text{image}(\beta_1)}.
\end{equation}

On the other hand, the inclusion maps in \eqref{j1} for $0\, \leq\, j\, \leq\, 2$ produce a
natural homomorphism
$$
\frac{\text{kernel}(\beta^c_2)}{\text{image}(\beta^c_1)}\,\ \longrightarrow\,\
\frac{\text{kernel}(\beta_2)}{\text{image}(\beta_1)}.
$$
The homomorphism $\widetilde{\Phi} (\rho)$ in \eqref{e26b} coincides with this natural homomorphism.

In view of \eqref{e9a} and the first isomorphism in \eqref{e9d}, the homomorphism
$$
(dF)_{\alpha} \ :\ T_{(E_G,\, \nabla')}{\mathcal M}_X(G)\ \longrightarrow\
T_\rho {\mathcal R}(G)
$$
in the theorem is given by a homomorphism ${\mathbb H}^1({\mathcal C}_\bullet)\, \longrightarrow\,
H^1(X_0,\, \underline{\text{ad}}(E'_G))$. Now using the isomorphisms in \eqref{e17}
and \eqref{e24} we have
\begin{equation}\label{e27}
(dF)_{\alpha} \ :\ \frac{\text{kernel}(\nabla''+
\overline{\partial}'_E)}{(\overline{\partial}_E\oplus \nabla')(C^\infty(X,\, \text{ad}(E_G)))}
\ \longrightarrow\ \frac{\text{kernel}(\beta_2)}{\text{image}(\beta_1)}.
\end{equation}

Take an open subset $U\, \subset\, X_0$. Let $\gamma^{1,0}$ (respectively, $\gamma^{0,1}$)
be a $C^\infty$ section, defined over $U$, of the vector bundle $\Omega^{1,0}(\text{ad}(E_G))$
(respectively, $\Omega^{0,1}(\text{ad}(E_G))$). Then the following three statements are equivalent:
\begin{enumerate}
\item $\nabla^\rho(\gamma^{1,0}+\gamma^{0,1})\ =\ 0$ (see \eqref{e23}).

\item $\overline{\partial}'_E (\gamma^{1,0})\ =\ 0\ =\ \nabla''(\gamma^{0,1})$ (see \eqref{e15}).

\item $(\nabla'' +\overline{\partial}'_E)(\gamma^{1,0}+\gamma^{0,1})\ =\ 0$.
\end{enumerate}

The equivalence between the second statement and the third statement follows from the fact that
$\Omega^{2,0}\,=\,0\,=\, \Omega^{0,2}$ on $X$. The first and the third statements are equivalent,
because
\begin{equation}\label{ei}
\nabla^\rho\ \,=\,\ \nabla'' +\overline{\partial}'_E.
\end{equation}
Recall that 
$\text{ad}(E'_G)\,=\, \text{ad}(E_G)\big\vert_{X_0}$.
Let $\gamma$ be a $C^\infty$ section, defined over $U$, of the vector bundle $\text{ad}(E_G)$.
Then clearly, $(\overline{\partial}_E +\nabla')(\gamma)\,=\, \nabla^\rho (\gamma)$ (see
\eqref{ei}, \eqref{e23} and \eqref{e15}).

Using these it follows that there is a natural homomorphism
\begin{equation}\label{a1}
\frac{\text{kernel}(\nabla''+
\overline{\partial}'_E)}{(\overline{\partial}_E\oplus \nabla')(C^\infty(X,\, \text{ad}(E_G)))}
\ \longrightarrow\ \frac{\text{kernel}(\beta_2)}{\text{image}(\beta_1)}.
\end{equation}
This homomorphism evidently coincides with the homomorphism $(dF)_{\alpha}$ in \eqref{e27}.
This also shows that $(dF)_{\alpha}$ is an isomorphism.

In view of \eqref{e9b} and the second isomorphism \eqref{e9d}, the homomorphism 
$$
(dF)^*_{F(\alpha)} \ :\ T^*_\rho {\mathcal R}(G) \ \longrightarrow\
T^*_{(E_G,\, \nabla')}
$$
in the theorem is given by a homomorphism $H^1_c(X_0,\, \underline{\text{ad}}(E'_G)) \, \longrightarrow\,
{\mathbb H}^1({\mathcal C}'_\bullet)$. Now using the isomorphisms in \eqref{e20}
and \eqref{e26} we have
\begin{equation}\label{e27b}
(dF)^*_{\alpha} \ :\ \frac{\text{kernel}(\beta^c_2)}{\text{image}(\beta^c_1)}\ \longrightarrow\
\frac{\text{kernel}(\delta)}{(\overline{\partial}_E\oplus
\nabla')(C^\infty(X,\, \text{ad}(E_G)(-D)))}.
\end{equation}

Just as the homomorphism in \eqref{a1} is constructed, we can construct a natural homomorphism
$$
\frac{\text{kernel}(\beta^c_2)}{\text{image}(\beta^c_1)}\ \longrightarrow\
\frac{\text{kernel}(\delta)}{(\overline{\partial}_E\oplus
\nabla')(C^\infty(X,\, \text{ad}(E_G)(-D)))}.
$$
The homomorphism $(dF)^*_{\alpha}$ in \eqref{e27b} coincides with it.

All the four homomorphisms in the theorem, namely $(dF)_{\alpha}$, $(dF)^*_{\alpha}$, $\Phi (\rho)$
and $\Psi_\alpha$, have been explicitly described. Now it is straightforward to deduce the
equality of maps in the theorem. Indeed, take any compacted supported $\text{ad}(E_G)$--valued $1$--form
$\omega\, \in\, \text{kernel}(\beta^c_2)$. Let $$[\omega]\ \in\ 
\frac{\text{kernel}(\beta^c_2)}{\text{image}(\beta^c_1)}\ =\ H^1_c(X_0,\, \underline{\text{ad}}(E'_G))
\ =\ T^*_{\rho} {\mathcal R}(G)$$
be the class of $\omega$. Then both the maps $(dF)_{\alpha} \circ \Psi_\alpha\circ (dF)^*_{F(\alpha)}$
and $\Phi_{F(\alpha)}$ in the theorem sends $[\omega]$ to the cohomology class of $\omega$ in
$$
\frac{\text{kernel}(\beta_2)}{\text{image}(\beta_1)}\ =\ H^1(X_0,\, \underline{\text{ad}}(E'_G))
\ =\ T_{\rho} {\mathcal R}(G).$$
This completes the proof.
\end{proof}

\begin{corollary}\label{cor1}
The homomorphism $\Psi$ in \eqref{e10} defines a Poisson structure on ${\mathcal M}_X(G)$.
\end{corollary}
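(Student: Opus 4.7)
The plan is to exhibit $\Psi$ as the pullback of the Poisson bivector $\Phi$ under the biholomorphism $F\colon {\mathcal M}_X(G)\to {\mathcal R}(G)$, so that the Poisson axioms for $\Psi$ follow from those for $\Phi$.

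First, I would rearrange the identity of Theorem \ref{thm1}. Since $F$ is a biholomorphism, $(dF)_\alpha$ is a $\mathbb{C}$--linear isomorphism at every $\alpha$, so the identity $(dF)_\alpha \circ \Psi_\alpha \circ (dF)^*_{F(\alpha)} \,=\, \Phi_{F(\alpha)}$ can be rewritten as $\Psi_\alpha \,=\, (dF)_\alpha^{-1} \circ \Phi_{F(\alpha)} \circ ((dF)^*_{F(\alpha)})^{-1}$, identifying $\Psi$ with the pullback $F^*\Phi$, viewed as a holomorphic section of $\mathrm{Hom}(T^*{\mathcal M}_X(G),\, T{\mathcal M}_X(G))$.

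Second, I would verify the two defining properties of a Poisson structure. Antisymmetry of $\Psi_\alpha$ as a map $T^*_\alpha{\mathcal M}_X(G) \to T_\alpha{\mathcal M}_X(G)$ is a pointwise condition preserved under conjugation by an invertible linear map, so it descends immediately from the known antisymmetry of $\Phi_{F(\alpha)}$. For the Jacobi identity, I would transport the bracket itself: for holomorphic functions $f,\, g$ on an open $U\subset {\mathcal M}_X(G)$, set $\{f,g\}(\alpha)\,:=\,\{f\circ F^{-1},\, g\circ F^{-1}\}_\Phi(F(\alpha))$. Because $F$ is a biholomorphism and $\{\cdot,\cdot\}_\Phi$ satisfies Jacobi by \cite{BJ}, so does $\{\cdot,\cdot\}$. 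The identification $\Psi = F^*\Phi$ then shows that this transported bracket coincides with the bracket $\langle df(\alpha),\, \Psi_\alpha(dg(\alpha))\rangle$ associated to $\Psi$, which therefore satisfies Jacobi.

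Finally, I would upgrade the analytic conclusion to an algebraic one: although $F$ is not algebraic, $\Psi$ itself is algebraic by its construction in Section \ref{sec2}. The antisymmetrisation $\Psi+\Psi^\top$ and the Schouten square $[\Psi,\Psi]$ are therefore algebraic sections of algebraic vector bundles on ${\mathcal M}_X(G)$ that vanish at every complex point by the preceding analytic argument, and so vanish as algebraic sections. I do not anticipate a substantial obstacle: the essential content is the observation that a biholomorphism (even one which is not algebraic) suffices to transport the Poisson axioms, and the only mildly delicate point—the analytic-to-algebraic passage—is standard via the identity principle.
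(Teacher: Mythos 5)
Your proposal is correct and is essentially the paper's argument: the paper's proof simply cites that $\Phi$ is a Poisson structure by \cite{BJ} and says the claim "follows immediately" from Theorem \ref{thm1}, which is exactly the transport-along-the-biholomorphism reasoning you spell out. Your additional remarks on antisymmetry, the Jacobi identity, and the analytic-to-algebraic passage are just the details the paper leaves implicit.
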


\begin{proof}
The homomorphism $\Phi$ in \eqref{e12} gives a Poisson structure on ${\mathcal R}(G)$ \cite{BJ}.
So from Theorem \ref{thm1} it follows immediately that $\Psi$ defines a
Poisson structure on ${\mathcal M}_X(G)$.
\end{proof}

\section*{Acknowledgements}

The first-named author is partially supported by a J. C. Bose Fellowship (JBR/2023/000003).

The second-named author is partially supported by an NSERC Discovery Grant.

\section*{Data availability}

No data was used or generated in the article.



\begin{thebibliography}{AAAAA}

\bibitem[ABDH]{ABDH} M. Aprodu, I. Biswas, S. Dumitrescu and S. Heller, On the monodromy
map for the logarithmic differential systems, {\it Bull. Soc. Math.
Fr.} {\bf 150} (2022), 543--568.

\bibitem[At]{At} M. F. Atiyah, Complex analytic connections in fibre
bundles, \textit{Trans. Amer. Math. Soc.} \textbf{85} (1957), 181--207.

\bibitem[AB]{AB} M. F. Atiyah and R. Bott, The Yang-Mills equations over Riemann surfaces,
{\it Philos. Trans. Roy. Soc. London} {\bf 308} (1983), 523--615.

\bibitem[Bi]{Bi} I. Biswas, On the moduli space of holomorphic $G$-connections on a compact Riemann
surface, {\it Euro. Jour. Math.} {\bf 6} (2020), 321--335.

\bibitem[BIKS]{BIKS} I. Biswas, M.-a. Inaba, A. Komyo and M.-H. Saito, Moduli spaces of
framed logarithmic and parabolic connections on a Riemann surface, {\it Moduli} {\bf 2} (2025),
1--85.

\bibitem[BRag]{BRag} I. Biswas and N. Raghavendra, Line bundles over a moduli space of logarithmic
connections on a Riemann surface, {\it Geom. Funct. Anal.} {\bf 15} (2005), 780--808.

\bibitem[BRam]{BR} I. Biswas and S. Ramanan, An infinitesimal study of the moduli of
Hitchin pairs, {\it Jour. London Math. Soc.} {\bf 49} (1994), 219--231.

\bibitem[BJ]{BJ} I. Biswas and L. Jeffrey, Poisson structure on character varieties,
{\it Ann. Math. Qu\'ebec} {\bf 45} (2021), 213--219.

\bibitem[Ch]{Ch} T. Chen, The associated map of the nonabelian Gauss--Manin connection,
{\it Cent. Eur. Jour. Math.} {\bf 10} (2012), 1407--1421.

\bibitem[De]{De} P. Deligne, {\it \'Equations diff\'erentielles \`a points singuliers
r\'eguliers}, Lecture Notes in Mathematics, Vol. 163, Springer-Verlag, Berlin-New York, 1970.

\bibitem[Go]{Go} W. Goldman, The symplectic nature of fundamental groups of
surfaces, {\em Adv. Math.} {\bf 54} (1984), 200--225.

\bibitem[Ni]{Ni} N. Nitsure, Moduli of semistable logarithmic connections, {\it Jour. Amer.
Math. Soc.} {\bf 6} (1993), 597--609.

\bibitem[Si1]{Si1}
C. T. Simpson, Moduli of representations of fundamental group of a smooth projective variety. I,
{\it Inst. Hautes \'Etu. Sci. Publ. Math.} {\bf 79} (1994), 47--129.

\bibitem[Si2]{Si2}
C. T. Simpson, Moduli of representations of fundamental group of a smooth projective variety. II,
{\it Inst. Hautes \'Etu. Sci. Publ. Math.} {\bf 80} (1994), 5--79 (1995).

\end{thebibliography}
\end{document}